\documentclass[11pt]{amsart}
\usepackage{amsmath,amscd,amssymb,enumerate,verbatim,palatino,parskip}
\usepackage[T1]{fontenc}
\usepackage{hyperref}
\usepackage{graphicx}
\usepackage{pinlabel}
\usepackage{tikz}
\usetikzlibrary{matrix}

\newcommand{\CC}{\mathbf{C}}

\newcommand{\RR}{\mathbf{R}}

\newcommand{\ZZ}{\mathbf{Z}}

\newcommand{\id}{\mathrm{Id}}

\numberwithin{equation}{section}

\newtheorem{thm}[equation]{Theorem}
\newtheorem{dfn}[equation]{Definition}
\newtheorem{lma}[equation]{Lemma}
\newtheorem{prp}[equation]{Proposition}
\newtheorem{cor}[equation]{Corollary}

\newtheorem{rmk}[equation]{Remark}

\newtheoremstyle{TheoremNum}
    {\topsep}{\topsep}              
    {\itshape}                      
    {}                              
    {\bfseries}                     
    {.}                             
    { }                             
    {\thmname{#1}\thmnote{ \bfseries #3}}
\theoremstyle{TheoremNum}

\begingroup 
\makeatletter 
\@for\theoremstyle:=definition,remark,plain,TheoremNum\do{%
\expandafter\g@addto@macro\csname th@\theoremstyle\endcsname{%
\addtolength\thm@preskip\parskip 
}%
} 
\endgroup 

\title{Exact Lagrangian caps and non-uniruled Lagrangian submanifolds}

\author{Georgios Dimitroglou Rizell}
\address{Universit\'{e} Paris-Sud, D\'{e}partement de Math\'{e}matiques, Bat. 425, 91405 Orsay, France}
\email{georgios.dimitroglou@math.u-psud.fr}
\urladdr{https://sites.google.com/site/georgiosdimitroglourizell/}
\thanks{This work was partially supported by the ERC Starting Grant of Fr{\'e}d{\'e}ric Bourgeois StG-239781-ContactMath.}

\begin{document}
\begin{abstract}
We make the elementary observation that the Lagrangian submanifolds of $\CC^n$, for each $n \ge 3$, constructed by Ekholm, Eliashberg, Murphy and Smith in \cite{ImmersionsFew} are non-uniruled and moreover have infinite relative Gromov width. The construction of these submanifolds use exact Lagrangian caps, which obviously are non-uniruled in themselves. This property is also used to show that if a Legendrian submanifold inside a 1-jet space admits an exact Lagrangian cap then its Legendrian contact homology DGA is acyclic.
\end{abstract}
\maketitle
\section{Introduction}
\subsection{Background}
A \emph{contact manifold} $(Y,\lambda)$ is a smooth $2n+1$-dimensional manifold with a maximally non-integrable hyper-plane field which here is given by $\ker \lambda$, where $\lambda$ is the so-called contact one-form. It follows that $\lambda \wedge (d\lambda)^{\wedge n}$ is a volume form on $Y$. A \emph{Legendrian submanifold} $\Lambda \subset Y$ is an $n$-dimensional submanifold which is tangent to $\ker \lambda$.

A \emph{symplectic manifold} $(X,\omega)$ is a $2n$-dimensional manifold $X$ together with a closed non-degenerate two-form $\omega$. A \emph{Lagrangian submanifold} $L \subset X$ is an $n$-dimensional submanifold satisfying $\omega|_{TL}=0$.

By a \emph{Lagrangian cap} of $\Lambda$ we mean a properly embedded Lagrangian submanifold (without boundary)
\[L_{\Lambda,\emptyset} \subset (\RR \times Y,d(e^t\lambda))\]
of the symplectisation of $(Y,\lambda)$ coinciding with half a cylinder $(-\infty,N] \times \Lambda$ outside of some compact  set. Here $t$ denotes the coordinate on the $\RR$-factor. We say that the cap is exact if the pull-back of the one-form $e^t\lambda$ to the cap is exact and, moreover, has a primitive which vanishes on the negative end.

Let $(X,d\alpha)$ be a compact Liouville domain with contact boundary $(Y:=\partial X,\lambda)$ and consider its completion $(\overline{X},d\alpha)$ formed by adjoining the non-compact end $(([0,+\infty) \times Y,d(e^t \lambda))$ consisting of half a symplectisation to $(X,d\alpha)$. We are interested in \emph{Lagrangian fillings} $L_{\emptyset,\Lambda} \subset \overline{X}$ of $\Lambda$, i.e.~a proper embedded Lagrangian submanifold (without boundary) coinciding with half a cylinder $[N,+\infty) \times \Lambda$ outside of a compact set.

Given a filling $L_{\emptyset,\Lambda}$ and a cap $L_{\Lambda,\emptyset}$ of $\Lambda$ as above one can construct their \emph{concatenation} as follows. After a translation of $L_{\Lambda,\emptyset}$ in the $t$-direction, which is an isotopy through exact Lagrangian submanifolds, we may suppose that there is some $N>0$ for which
\[L_{\emptyset,\Lambda} \cap \{ t \ge N-1 \} = [N-1,+\infty) \times \Lambda \]
in the cylindrical end of $\overline{X}$, while
\[ L_{\Lambda,\emptyset} \cap \{ t \le N\}=(-\infty,N] \times \Lambda \]
inside the symplectisation. We define the concatenation to be the closed Lagrangian submanifold
\[ L = ( L_{\emptyset,\Lambda} \cap \{ t \le N \}) \cup  (L_{\Lambda,\emptyset} \cap \{ t \ge N\}) \subset \overline{X}\]
where $ L_{\Lambda,\emptyset} \cap \{ t \ge N\}$ has been canonically identified with an exact Lagrangian submanifold (with boundary) of the cylindrical end of $\overline{X}$. The concatenation is thus the simultaneous resolution of the conical singularities of both $\RR \times Y$ and $L_{\Lambda,\emptyset} \subset \RR \times Y$. Observe that we do not assume the filling to be exact, and hence neither is the concatenation in general. 

We say that a Lagrangian immersion is \emph{displaceable} if there exists a time-dependent Hamiltonian isotopy for which the image of the immersion is disjoint from its image composed with the time-one map of the Hamiltonian flow.

We will also need the following notions.

Consider a Lagrangian submanifold $L \subset (X,\omega)$ of a general symplectic manifold and let $\mathcal{B}(X,L,r)$ be the set of symplectic embeddings of $B^{2n}(r) \subset (\CC^n,\omega_0)$ into $X$ mapping the real-part into $L$ and which otherwise has image disjoint from $L$. The following notion was first was considered in \cite{HomotopyDynamics} by Barraud and Cornea.
\begin{dfn}
The \emph{(relative) Gromov width} of a Lagrangian submanifold $L \subset X$ is given by
\[ w(L,X) := \sup\{ \pi r^2 \in [0,+\infty) ; \:\: \mathcal{B}(X,L,r) \ne \emptyset \}. \]
\end{dfn}
The Weinstein Lagrangian neighborhood theorem implies that the Gromov width is positive and it is obviously invariant under Hamiltonian isotopy.

\begin{dfn}
A Lagrangian immersion $L \subset (X,\omega)$ whose self-intersections consist of transverse double-points is said to be \emph{uniruled} there exists some $A>0$ for which there is a Baire subset of compatible almost complex structures $J$ such that, for any given $x \in L$, there is a non-constant $J$-holomorphic disc in $X$ of $\omega$-area at most $A$ and which has boundary on $L$ passing through $x$.
\end{dfn}

Uniruledness is known to imply finiteness of the Gromov width, see for instance \cite[Corollary 3.10]{LagIntSerre}. It has been expected that every displaceable Lagrangian submanifold is uniruled \cite[Conjecture 3.15]{HomotopyDynamics}. The observations in this paper shows that this conjecture is false, since it is not satisfied by the Lagrangian submanifolds of $\CC^n$ constructed in \cite{ImmersionsFew} by Ekholm, Eliashberg, Murphy and Smith. See Corollary \ref{cor:width} below.

For another source of counter examples, see \cite{ClosedExactLag} where Murphy constructs  exact closed Lagrangian submanifolds inside certain symplectisations (which hence also are displaceable but not uniruled).

\begin{dfn}
Let $L \subset (\overline{X},d\alpha)$ be a Lagrangian submanifold of a simply connected symplectic manifold with vanishing Chern class. Let $\sigma \in H^1(L;\RR)$ denote the pull-back of $\alpha$ to $L$, called the symplectic action, and let $\mu \in H^1(L,\ZZ)$ be the Maslov class of $L$. If
\[\sigma=K\mu\]
for some $K>0$, we say that $L$ is \emph{monotone}.
\end{dfn}

Previous results show that uniruledness indeed holds for many classes of displaceable Lagrangian submanifolds. For instance, it was shown for certain monotone Lagrangian submanifolds by Biran and Cornea \cite{RigidityUniruling}, a result which in particular applies to  monotone Lagrangian submanifolds of $\CC^n$. Subsequent work of Charette \cite{GeomRefinement} proves \cite[Conjecture 3.15]{HomotopyDynamics} for monotone Lagrangian submanifolds. Also, see Cornea and Lalonde \cite{ClusterHomology}, and Zehmisch \cite{CodiscRadius} for similar results.

Note that Theorem \ref{thm:uniruled} below also is a variant of such a result.

Finally, in forthcoming work by Borman and McLean \cite{WidthCapacities} the finiteness of the Gromov width is shown for a large class of, not necessarily monotone, Lagrangian submanifolds of $\CC^n$ characterised by the topological property of admitting a metric with non-positive scalar curvature. More precisely, the Gromov width is shown to be bounded below by four times the displacement energy for such a Lagrangian submanifold.

\subsection{Results}
\subsubsection{Counterexamples to \cite[Conjecture 3.15]{HomotopyDynamics}}
We first make the following elementary observation.
\begin{prp}
\label{prop:width}
Let $L \subset \overline{X}$ be a closed Lagrangian submanifold which is on the form
\[L \cap ([N,N+\epsilon] \times Y)=[N,N+\epsilon] \times \Lambda,\]
for some $N \ge 0$. If $e^t \lambda$ is exact on $L_{cap}:=L \cap \{ t \ge N \}$ and if a primitive can be chosen which vanishes along its boundary, it follows that
\begin{itemize}
\item $L$ is not uniruled,
\item $L$ has infinite relative Gromov width.
\end{itemize}
\end{prp}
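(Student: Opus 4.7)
The plan is to neck-stretch along the contact hypersurface $\{t = N+\epsilon/2\}\times Y$ so that $L_{cap}$ is isolated in the topmost symplectization level of the SFT limit, where exactness then forces the $\omega$-area of the limit component through $x$ to vanish. For the width statement, the Liouville flow on the cap region conformally rescales a Weinstein neighborhood ball to arbitrarily large capacity.

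\emph{Non-uniruledness.} Assume for contradiction that $L$ is uniruled with some area bound $A>0$ and Baire subset $\mathcal{J}_0$ of compatible almost complex structures. Fix $x \in L \cap \{t > N + \epsilon\}$, strictly above the cylindrical collar. By density of $\mathcal{J}_0$, for each $k$ choose $J_k \in \mathcal{J}_0$ arbitrarily close to an almost complex structure realizing a $k$-fold neck-stretching along $\{t = N+\epsilon/2\}$. Uniruledness then provides a $J_k$-holomorphic disc $u_k$ with boundary on $L$, passing through $x$, and of $\omega$-area at most $A$. SFT compactness yields, along a subsequence, a stable pseudo-holomorphic building comprising the bottom piece (the completion of $\overline{X} \cap \{t \le N + \epsilon/2\}$ with a positive cylindrical end, containing the filling side of $L$), possibly several intermediate symplectization copies of $\RR \times Y$, and a topmost copy of $\RR\times Y$ carrying the extended cap
\[ L_{cap}' := \big((-\infty, N+\epsilon/2]\times\Lambda\big) \cup \big(L \cap \{t \ge N+\epsilon/2\}\big). \]
The uniform area bound rules out any positive puncture at $t = +\infty$ in the top level, so the component $v$ having $x$ on its boundary admits only negative punctures (asymptotic at $t = -\infty$ to Reeb orbits of $Y$ in the interior, or to Reeb chords of $\Lambda$ on the boundary).

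The primitive $f$ of $(e^t\lambda)|_{L_{cap}}$ vanishing on $\{N\}\times\Lambda$ extends to $L_{cap}'$ by zero on the cylindrical portion, since $\lambda|_\Lambda = 0$ forces $(e^t\lambda)|_{\RR\times\Lambda} \equiv 0$ and the extension must be locally constant matching zero at the collar. Apply Stokes to the truncation $\Sigma_T := v^{-1}\{t \ge -T\}$: the boundary integrals split into arc integrals on $L_{cap}'$, telescoping via $v^*(e^t\lambda) = d(f\circ v)$ to values of $f$ at endpoints approaching chord punctures at $-\infty$ where $f = 0$, and small loop integrals of $e^t\lambda$ around interior Reeb-orbit punctures at $t = -T$, which are $O(e^{-T})$. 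Both contributions vanish as $T \to \infty$, so $E_\omega(v) = 0$. Positivity of area then forces $v$ to be constant, contradicting the stability of the limit building at the marked point mapping to $x$.

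\emph{Infinite Gromov width.} Translate the upper cap of $L$ upward by an amount $s$ and stretch the cylindrical bridge accordingly. This Lagrangian isotopy is Hamiltonian: its generating 1-form on the isotopic Lagrangian $L_t$ equals $df$ on the translated cap and vanishes on the cylindrical and filling regions, and is globally exact precisely because $f$ vanishes on $\partial L_{cap}$. Let $L^{(s)}$ denote the resulting Lagrangian; by Hamiltonian invariance, $w(L, X) = w(L^{(s)}, X)$. Starting from a Weinstein embedding $\psi_0 : B^{2n}(r_0) \to \overline{X}$ around a point in the upper cap of $L$ (real part on $L$, remainder disjoint from $L$), and using $\phi_s(t,y) = (t+s,y)$ which satisfies $\phi_s^*\omega = e^s \omega$, define
\[ \psi_s := \phi_s \circ \psi_0 \circ \rho_{e^{-s/2}} : B^{2n}(r_0 e^{s/2}) \to \overline{X}, \quad \rho_\alpha(z) := \alpha z. \]
The pullback $\psi_s^*\omega = e^s \cdot (e^{-s/2})^2 \cdot \omega_0 = \omega_0$, so $\psi_s$ is a genuine symplectic embedding. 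Its real part lies in $\phi_s(\text{upper cap of }L) \subset L^{(s)}$, and the rest is disjoint from $L^{(s)}$. Hence $w(L, X) = w(L^{(s)}, X) \ge \pi r_0^2 e^s$, which is unbounded in $s$.

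The main technical obstacle is the SFT compactness argument and the identification of the top-level component $v$ through $x$, including the absence of positive punctures in the top level (forced by the uniform area bound) and the stability/non-constancy of $v$ at the marked point. Once the primitive $f$ has been extended by zero to the cylindrical portion of $L_{cap}'$, both the Stokes vanishing and the Liouville rescaling of balls are direct.
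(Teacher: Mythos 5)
Your proposal is correct in substance, but your non-uniruledness argument takes a genuinely different and much heavier route than the paper's. The width half is essentially identical to the paper's proof: the cut-off Liouville flow $\phi^s$ restricts to an exact Lagrangian isotopy of the closed $L$ precisely because the primitive $f$ vanishes on $\partial L_{cap}$ (the variation one-forms are $\alpha|_L + d((e^s-1)f)$, hence cohomologous), and a Weinstein ball in the cap region transported by $\phi^s$ and rescaled produces arbitrarily large capacity; you are in fact slightly more careful than the paper about the conformal factor (radius scales by $e^{s/2}$, capacity by $e^s$). For non-uniruledness, however, the paper never invokes SFT compactness: it reuses the same flow, noting that $L_s=\phi^s(L)$ is Hamiltonian isotopic to $L$ and that uniruledness is Hamiltonian-invariant with the same constant $A$, so a disc of area at most $A$ through a point high up in the stretched cap must either stay in $\{t\ge N\}$ --- ruled out by the unpunctured, and much easier, version of your Stokes computation --- or traverse the neck $[N,N+s]\times Y$, where Gromov's monotonicity lemma (\cite{SomeProp}) for a fixed translation-invariant $J$ forces area at least $A+1$ once $s\gg 0$. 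What the soft route buys is exactly the avoidance of your self-identified ``main technical obstacle'': SFT compactness requires the Reeb orbits of $(Y,\lambda)$ and the chords of $\Lambda$ to be nondegenerate or at least Morse--Bott, which the hypotheses do not grant (indeed in the motivating case $Y=S^{2n-1}$ with $\lambda_{std}$ the Reeb flow is Morse--Bott degenerate), so your argument needs an additional perturbation of the contact form or stretching hypersurface, keeping $L$ cylindrical, or Morse--Bott compactness. One step in your limit analysis also needs patching: constancy of the single component $v$ through $x$ does not by itself contradict stability, since stable buildings may contain constant components; the fix is to note that your Stokes argument applies verbatim to \emph{every} top-level component, that a constant component carries no punctures, and hence that the connected building lies entirely in the top level and is globally constant at $x$, which does violate stability (equivalently, it contradicts the uniform positive area threshold, via monotonicity, for the nonconstant discs $u_k$ through $x$). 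With those repairs your proof goes through, but it deploys precisely the compactness machinery the paper's two-line combination of Stokes, monotonicity, and Hamiltonian invariance is engineered to bypass.
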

In particular the assumptions above are satisfied for $L$ being the concatenation of an exact Lagrangian cap and a Lagrangian filling.

Observe that the construction of displaceable Lagrangian submanifolds satisfying the assumptions of Proposition \ref{prop:width} is highly non-trivial. The only examples in $\CC^n$ known to the author are the examples constructed in \cite{ImmersionsFew}. Their construction provides a variety of examples in $\CC^n$ for each $n \ge 3$. However, note that these examples all are non-orientable when $n$ is even (see Remark \ref{rem:orient}). We outline the construction in Section \ref{sec:constr}.

In particular, they provide non-monotone Lagrangian embeddings of $S^1 \times S^{2k}$ into $\CC^{1+2k}$ for $k>0$ satisfying these assumptions. Using these constructions, it is easy to show
\begin{cor}
\label{cor:width}
Let
\[S:=S^{i_1} \times \hdots \times S^{i_m}\]
be an $M$-dimensional product of spheres, where $i_j \ge 0$. For each $k>0$ there exists a Lagrangian embedding of $S^1 \times S^{2k} \times S$ into $\CC^{1+2k+M}$ having infinite Gromov width.
\end{cor}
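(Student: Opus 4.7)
The plan is, for each product of spheres $S$, to construct a Lagrangian embedding of $L := S^1 \times S^{2k} \times S$ into $\CC^{1+2k+M}$ of the concatenation form required by Proposition \ref{prop:width}; infinite relative Gromov width then follows immediately from that proposition.

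To produce the embedding I would apply the construction of \cite{ImmersionsFew} outlined in Section \ref{sec:constr}, with $L$ in place of the manifold $S^1 \times S^{2k}$ treated there. The two formal inputs needed are (i) a Lagrangian immersion of $L$ into $\CC^{1+2k+M}$, which exists because $L$ is a product of spheres and hence stably parallelisable, and (ii) vanishing of the algebraic count of double points of a generic such immersion, which follows because the $S^1$ factor in $L$ forces $\chi(L)=0$. Granted these inputs, the EEMS machinery produces an exact Lagrangian cap of the standard Legendrian unknot $\Lambda_0 \subset S^{2(1+2k+M)-1}$ whose interior is diffeomorphic to $L$ with an open disk removed; concatenating this cap with the standard Lagrangian disk $D^{1+2k+M} \subset \CC^{1+2k+M}$, which is an exact Lagrangian filling of $\Lambda_0$, yields the desired closed Lagrangian embedding of $L$ into $\CC^{1+2k+M}$.

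The main obstacle is the verification that the construction of \cite{ImmersionsFew} really does extend verbatim from $S^1 \times S^{2k}$ to $L = S^1 \times S^{2k} \times S$. I expect this to be essentially formal: the topological features on which the construction relies, namely stable parallelisability of the underlying manifold and the presence of a free $S^1$ factor to absorb the single Whitney double point, are shared between $L$ and $S^1 \times S^{2k}$. The only subtlety I anticipate is the orientability issue of Remark \ref{rem:orient}, which becomes relevant when $M$ is odd (so $\dim L$ is even), but which should not obstruct the existence of the exact cap nor the resulting conclusion of infinite Gromov width.
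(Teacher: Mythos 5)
There is a genuine gap, and it sits exactly at the step you flag as ``essentially formal''. Your plan concatenates an \emph{exact} Lagrangian cap of $\Lambda_0$ with an \emph{exact} Lagrangian filling (the standard Lagrangian disk $D^{1+2k+M}$). This cannot work: the concatenation of an exact cap and an exact filling is a closed \emph{exact} Lagrangian submanifold of $\CC^{1+2k+M}$, which is forbidden by Gromov's theorem \cite{Gromov} --- a point the paper makes explicitly in Section \ref{sec:constr}. Moreover, the Eliashberg--Murphy caps \cite{LagCaps} used in \cite{ImmersionsFew} have negative end a \emph{loose} Legendrian sphere, and the flat disk is a filling of the standard (non-loose) unknot, not of the loose sphere. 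The actual EEMS construction must use an immersed exact filling with a double point, which is then resolved by a Polterovich surgery; this surgery changes the diffeomorphism type of the filling (it is where the $S^1\times S^{n-1}$ summand comes from), which is precisely why EEMS obtain $M\#(S^1\times S^2)$ in $\CC^3$ and $S^1\times S^{2k}$ in $\CC^{1+2k}$, rather than an arbitrarily prescribed manifold. So ``running EEMS verbatim with $L=S^1\times S^{2k}\times S$ in place of $S^1\times S^{2k}$'' does not obviously output a Lagrangian embedding of $L$ itself, and nothing in your argument (stable parallelisability, $\chi(L)=0$) closes that gap.

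The paper's proof avoids re-running the EEMS machinery altogether. It uses the symplectic embedding $\CC^n\times T^*S^l \hookrightarrow \CC^{n+l}$ (via $\CC^n\times T^*S^l\simeq T^*(\RR^n\times S^l)\subset T^*\RR^{n+l}$) and takes the product of the known EEMS Lagrangian $S^1\times S^{2k}\subset\CC^{1+2k}$ --- which has infinite Gromov width by Proposition \ref{prop:width} --- with the zero sections of the $T^*S^{i_j}$. Infinite width then follows from two elementary observations: the zero section of a cotangent bundle has infinite relative Gromov width, and if $L_1\subset X_1$ and $L_2\subset X_2$ both have infinite width then so does $L_1\times L_2\subset X_1\times X_2$ (relative balls multiply and contain a relative ball of the same radius). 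If you want the product examples to be honest cap--filling concatenations as in Proposition \ref{prop:width}, that is the content of Remark \ref{rem:spin} via the front-spinning construction \cite{NoteSpin}, but it is not needed for the width statement. I would suggest rewriting your argument along these product lines rather than trying to prescribe the topology of the EEMS output.
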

\begin{proof}
The symplectic manifold
\begin{eqnarray*}
\lefteqn{(\CC^n \times T^*(S^l),\omega_0 \oplus d\theta_{S^l})} \\
& \simeq& (T^*(\RR^n) \times T^*(S^l),d\theta_{\RR^n}\oplus d\theta_{S^l}) \\
& \simeq & (T^*(\RR^n \times S^l),d\theta_{\RR^n \times S^l} ) \\
& \subset & (T^*(\RR^{n+l}),d\theta_{\RR^{n+l}} )
\end{eqnarray*}
embeds into $\CC^{n+l}$, where $\theta_N$ denotes the Liouville one-form on $T^*N$. Taking a product of a Lagrangian submanifold $L \subset \CC^n$ with the zero section in $T^*(S^l)$ thus produces a Lagrangian embedding of $L \times S^l$ in $\CC^{n+l}$.

Observe that if $L_1 \subset (X_1,\omega_1)$ and $L_2 \subset (X_2,\omega_2)$ have infinite Gromov width, the same is true for $L_1 \times L_2 \subset (X_1 \times X_2, \omega_1 \oplus \omega_2)$. The result now follows from the fact that the zero-section inside a cotangent bundle has infinite Gromov width together with the observations that the construction in \cite{ImmersionsFew} of a Lagrangian embedding of $S^1 \times S^{2k}$ inside $\CC^{1+2k}$ satisfy Proposition \ref{prop:width}.
\end{proof}
This construction is related to the front-spinning construction found Ekholm, Etnyre and Sullivan \cite[Section 4.4]{NonIsoLeg} and generalised by Golovko \cite{NoteSpin}. See Remark \ref{rem:spin} below.

It is interesting to note that the situation is very different for Lagrangian embeddings of $S^1 \times S^{2k+1}$ in $\CC^{2+2k}$ for $k>0$. Namely, work by Fukaya and Oh \cite[Proposition 2.10]{ApplicationsFloer} implies that such a Lagrangian submanifold is monotone. Also, see \cite[Corollary 4.6]{ClusterHomology} for a proof of uniruledness of these Lagrangian submanifolds.

Finally, a sufficiently stabilised Legendrian knot inside $(S^{3},\lambda_{std})$ admits a Lagrangian cap in the symplectisation by a result due to Lin \cite{ExactCapsKnots}. These caps can be used to construct \emph{non-orientable} Lagrangian submanifolds of $\CC^2$ satisfying the assumptions of Proposition \ref{prop:width}. This construction is related to the exact Lagrangian immersions constructed explicitly by Sauvaget in \cite{Sauvaget}.

\subsubsection{Implications for the Legendrian contact homology of $\Lambda$} We will in the following consider contact manifolds
\[(Y=P \times \RR,\lambda=dz+\theta)\]
being contactisations of an exact symplectic manifold $(P,d\theta)$, where $z$ denotes a coordinate on the $\RR$-factor. We will moreover assume that $(P,d\theta)$ is symplectomorphic to the completion of a Liouville domain. Observe that a Legendrian submanifold of $P \times \RR$ projects to an exact Lagrangian immersions in $P$ and, conversely, a generic exact Lagrangian immersion lifts to a Legendrian submanifold of $P \times \RR$.

Important examples of contactisations of the above form is the contact manifold
\[(J^1(M)=T^*M \times \RR,dz+\theta_M),\]
where $\theta_M$ is the Liouville form on $T^*M$, as well as the standard contact $(2n+1)$-space
\[(J^1(\RR^n)=(\RR^n \times \RR^n \times \RR),dz+\theta_{\RR^n}=dz+p_i dq^i).\]

Displaceability together with non-obstructedness for different Floer theories associated to a Lagrangian submanifold has in many cases been shown to imply uniruledness. For instance, see \cite[Corollary 1.18]{ClusterHomology}. Here we will consider the Legendrian contact homology differential graded algebra (DGA for short), also called the Chekanov algebra, associated to the Legendrian lift of an exact Lagrangian immersion (see Section \ref{sec:lch} below for more details).

An augmentation is a unital DGA-morphism to the coefficient ring (considered as a trivial DGA), and the existence of an augmentation should be viewed as a certain non-obstructedness of the Legendrian contact homology. The following proposition (and its proof) is a direct generalisation of \cite[Theorem 5.5]{DualityLeg}, a result which can be translated into the statement that if $L$ is displaceable and if the Chekanov DGA of its Legendrian lift has an augmentation, then $L$ is uniruled. 
\begin{thm}
\label{thm:uniruled}
Let $R$ be a unital ring and $L \subset (P,d\theta)$ a displaceable exact Lagrangian immersion. If $1+1 \neq 0$ in $R$ then we moreover assume that $L$ is spin, and we fix the choice of a spin structure. Suppose that the Chekanov DGA with coefficients in $R$ of the Legendrian lift of $L$ is not acylic (with or without Novikov coefficients), then $L$ is uniruled.
\end{thm}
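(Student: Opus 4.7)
The plan is to prove the contrapositive: assuming $L \subset (P, d\theta)$ is displaceable and not uniruled, show that the Chekanov DGA $\mathcal{A}(\Lambda)$ of the Legendrian lift $\Lambda \subset P \times \RR$ of $L$ is acyclic. The argument closely follows \cite[Theorem 5.5]{DualityLeg}, modified to use only non-acyclicity rather than the existence of an augmentation.

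\textbf{Two-copy and displacement.} Choose a $C^2$-small positive Morse function $f : L \to \RR$ and let $\Lambda_f$ be the Legendrian obtained by translating $\Lambda$ upwards in the $z$-direction by $f$. For $f$ sufficiently small, the short Reeb chords from $\Lambda$ to $\Lambda_f$ are in bijection with critical points of $f$, and together with the chords of $\Lambda$ and $\Lambda_f$ they generate the two-copy Chekanov DGA $\mathcal{A}(\Lambda \sqcup \Lambda_f)$. The mixed chords form a DG-bimodule $C(\Lambda, \Lambda_f)$ over $\mathcal{A}(\Lambda) \otimes \mathcal{A}(\Lambda_f)$ which in the Morse limit carries a distinguished generator $c_\star$ representing the fundamental class of $L$. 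Lifting the displacing Hamiltonian to a contact isotopy $\tilde\phi_t$ on $(P \times \RR, dz + \theta)$ and applying its Lagrangian suspension to the second component only yields an exact Lagrangian cobordism in the symplectization from $\Lambda \sqcup \Lambda_f$ at the negative end to $\Lambda \sqcup \tilde\phi_1(\Lambda_f)$ at the positive end; since $L$ is displaceable, $\tilde\phi_1(\Lambda_f)$ can be placed with $z$-projection disjoint from and above that of $\Lambda$, so no mixed Reeb chords remain at the positive end. The cobordism is Hamiltonian isotopic to a trivial cylinder and hence induces, via standard cobordism functoriality of Legendrian contact homology, a quasi-isomorphism of two-copy DGAs that restricts to a quasi-isomorphism on the mixed factor. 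We deduce that $C(\Lambda, \Lambda_f)$ is acyclic and in particular $[c_\star] = 0$ in its homology.

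\textbf{Non-uniruledness forces acyclicity.} Fix $A > 0$ exceeding the symplectic energy of the above cobordism. By non-uniruledness, for a Baire-generic compatible $J$ there exists a point $x \in L$ admitting no non-constant $J$-holomorphic disc of area $\le A$ through $x$. Arrange $f$ so that its extremum is at $x$. SFT compactness in the limit $\|f\|_{C^0} \to 0$, exactly as in \cite[\S 5]{DualityLeg}, forces every $J$-holomorphic disc contributing to the differential of $c_\star$ or to the cobordism map to break off a non-trivial $J$-holomorphic disc on $L$ through $x$ of area bounded by $A$; no such disc exists by the choice of $x$. All disc contributions therefore vanish, the differential on $C(\Lambda, \Lambda_f)$ reduces to the Morse differential of $f$, and $c_\star$ becomes a non-trivial Morse cycle representing $[L] \in H_n(L; R)$, non-zero under the spin hypothesis when $2 \neq 0$ in $R$. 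Reconciling $[c_\star] = 0$ in $H(C(\Lambda, \Lambda_f))$ with its non-triviality as a Morse cycle is only possible if $1 \in \mathcal{A}(\Lambda)$ is itself a boundary, i.e.~if $\mathcal{A}(\Lambda)$ is acyclic.

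\textbf{Main obstacle.} The essential analytic input, as in \cite[\S 5]{DualityLeg}, is the SFT compactness and monotonicity analysis of moduli spaces of $J$-holomorphic discs with boundary on the cobordism and on the two-copy, showing that as the Morse perturbation shrinks every non-trivial disc configuration limits to a Morse flow line together with a disc bubble on $L$ whose area is a priori bounded by the cobordism's symplectic energy. The Novikov-coefficient variant requires no change since the argument respects the action filtration throughout, and the spin hypothesis enters only for sign coherence of the disc counts defining the bimodule structure.
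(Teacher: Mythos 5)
Your overall strategy (contrapositive, two-copy with a small Morse function, displaceability killing the mixed-chord complex, non-uniruledness applied to discs through a chosen point) matches the paper's, but there is a genuine gap at the algebraic heart of the argument, which is exactly the point where the paper departs from \cite[Theorem 5.5]{DualityLeg}. You work with the mixed-chord module over $\mathcal{A}(\Lambda)\otimes\mathcal{A}(\Lambda_f)$, but the differential on mixed chords is \emph{not} linear over this algebra: by the Leibniz rule, $\partial$ of a mixed chord produces words whose pure-chord coefficients are themselves subject to $\partial$. In \cite{DualityLeg} this is handled by an augmentation, which reduces coefficients to $R$; since you only assume non-acyclicity, no augmentation is available, and your final sentence (``reconciling $[c_\star]=0$ with its non-triviality as a Morse cycle is only possible if $1\in\mathcal{A}(\Lambda)$ is a boundary'') has no mechanism behind it. The paper's substitute is the \emph{characteristic algebra} $\mathcal{C}_R=\mathcal{A}/\mathcal{A}(\im\partial)\mathcal{A}$ of Ng \cite{Computable}: after quotienting, the differential on the mixed module $\mathcal{C}(L_1,L_2;R)$ of Section \ref{sec:char} becomes $\mathcal{C}_R\otimes_R\mathcal{C}_R^{op}$-linear, the Morse-complex identification holds with coefficients in $\mathcal{C}_{L;R}$, and the contradiction between ``$c_L$ is a non-bounding cycle'' and ``the complex is acyclic'' forces the coefficient ring itself to vanish, $\mathcal{C}_{L;R}=0$, which is equivalent to acyclicity of the DGA since the algebras are unital. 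Without this (or some equivalent coefficient reduction), your argument does not close.

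A second, related error: your claim that non-uniruledness makes \emph{all} disc contributions vanish, so that ``the differential on $C(\Lambda,\Lambda_f)$ reduces to the Morse differential of $f$,'' is too strong and in fact self-defeating. Non-uniruledness supplies, for each area bound, a generic $J$ and a \emph{single} point $x\in L$ through which no non-constant disc passes; placing the maximum of $f$ at $x$ only kills the component $\rho\colon Q\to C$ landing on the fundamental class $c_L$. The components $\partial_Q$, $\partial_P$, $\eta$ and $\sigma$ survive, and the paper must separately prove $\sigma(c_L)=0$ by an explicit count of \emph{constant} discs at each double point, which cancel in pairs as $(c-c)p_c=0$ thanks to the sign twist $\Phi(1\otimes c_1)=[1]\otimes[-c_1]$ in the module structure (over $\ZZ_2$ this is automatic, over $\ZZ$ it needs the sign). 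Note also that if your reduction to the Morse differential with $R$-coefficients were correct, the mixed complex would have homology $H(L;R)\neq 0$, flatly contradicting the acyclicity forced by displaceability with no escape clause --- i.e.\ it would ``prove'' that displaceable non-uniruled exact Lagrangian immersions cannot exist, contradicting the examples built from exact caps in this very paper. The resolution is again that the Morse complex has coefficients in $\mathcal{C}_{L;R}$, which can (and here must) be zero. Your displaceability step via Lagrangian suspension is a reasonable alternative to the paper's direct appeal to invariance (Theorem \ref{thm:invariance}), but it does not repair the two issues above.
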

\begin{rmk} Because of the strict non-commutativity of the DGA under consideration, admitting an augmentation is strictly stronger than being acyclic. See \cite{IsotopiesKnots} for examples.
\end{rmk}

In Section \ref{sec:double} below, assuming that $\Lambda \subset (J^1(M),dz+\theta_M)$ admits an exact Lagrangian cap in the symplectisation, we construct a displaceable exact Lagrangian immersion
\[ L_\Lambda \subset (\RR \times J^1(M),d(e^t(dz+\theta_M)))\]
of a closed manifold whose Legendrian contact homology DGA without Novikov coefficients is homotopy equivalent to that of $\Lambda$ by Lemma \ref{lem:equiv}. In other words
\begin{cor}
\label{cor:acyclic}
Suppose that
\[L_{\Lambda,\emptyset} \subset (\RR \times J^1(M),d(e^t(dz+\theta_M)))\]
is an exact Lagrangian cap with a negative end being a half cylinder over the Legendrian submanifold
\[\Lambda \subset (J^1(M),dz+\theta_M).\]
It follows that the Legendrian contact homology DGA of $\Lambda$ with $\ZZ_2$-coefficients (without Novikov coefficients) is acyclic.

If $L_{\Lambda,\emptyset}$ is a spin cobordism, it follows that the same is true with $\ZZ$-coefficients, given that the DGA of $\Lambda$ is induced by a choice of spin structure on the cap.
\end{cor}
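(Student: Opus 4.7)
The plan is to reduce the statement to the contrapositive of Theorem \ref{thm:uniruled} applied to the displaceable exact Lagrangian immersion $L_\Lambda \subset (\RR \times J^1(M), d(e^t(dz+\theta_M)))$ promised in Section \ref{sec:double}. Granting both the construction of $L_\Lambda$ and the DGA homotopy equivalence of Lemma \ref{lem:equiv}, the argument is purely formal: if the Chekanov DGA of $\Lambda$ were not acyclic, then by Lemma \ref{lem:equiv} neither would be that of $L_\Lambda$, and Theorem \ref{thm:uniruled} combined with displaceability would force $L_\Lambda$ to be uniruled, contradicting a non-uniruledness statement analogous to Proposition \ref{prop:width}.

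The construction I have in mind is a doubling of the cap $L_{\Lambda,\emptyset}$: take a $t$-translated copy of the cap and join it to the original by a surgery along (part of) the common cylindrical negative end. The hypothesis that the primitive on $L_{\Lambda,\emptyset}$ vanishes on its negative end ensures that the two primitives agree on the gluing region and fit into a global primitive on $L_\Lambda$; thus $L_\Lambda$ is an exact immersion of a closed manifold, spin whenever $L_{\Lambda,\emptyset}$ is. Displaceability is then immediate: $L_\Lambda$ is compactly supported in $\RR \times J^1(M)$, and the Liouville field $\partial_t$, suitably cut off, generates a Hamiltonian isotopy displacing $L_\Lambda$ from itself by translation in the $t$-direction.

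Non-uniruledness of $L_\Lambda$ should follow from the Stokes-theorem mechanism underlying Proposition \ref{prop:width}, adapted to the immersed case: any pseudoholomorphic disc with boundary on $L_\Lambda$ has area equal to the total variation of the primitive along its boundary (together with jumps at double points), and the vanishing of the primitive at the gluing locus constrains this variation tightly enough to rule out a uniform area bound for discs through every base point. The main obstacle I expect is the analytic content of Lemma \ref{lem:equiv}, which identifies the moduli of rigid punctured curves contributing to the two DGAs — presumably via an SFT neck-stretching localised at the gluing seam, matching broken configurations on the $L_\Lambda$-side with rigid curves on the $\Lambda$-side. Granted this, Theorem \ref{thm:uniruled} yields acyclicity of the DGA of $\Lambda$ with $\ZZ_2$-coefficients unconditionally, and with $\ZZ$-coefficients under the spin hypothesis on the cap.
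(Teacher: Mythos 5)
Your top-level reduction is exactly the paper's: Section \ref{sec:double} doubles the cap into a closed exact Lagrangian immersion $L_\Lambda$, Lemma \ref{lem:equiv} transfers the DGA, displaceability plus the contrapositive of Theorem \ref{thm:uniruled} plus non-uniruledness of $L_\Lambda$ then give acyclicity. But the two supporting mechanisms you sketch would fail if executed as written. For displaceability: a cut-off Liouville field does \emph{not} generate a Hamiltonian isotopy, since $\mathcal{L}_{\partial_t}\,d(e^t\lambda)=d(e^t\lambda)$ makes its flow conformally symplectic rather than symplectic. The repair is the mechanism from the proof of Proposition \ref{prop:width}: the induced isotopy of $L_\Lambda$ is through \emph{exact} Lagrangian immersions (the primitive rescales by $e^s$, using its vanishing on the end), and an exact Lagrangian isotopy is realized by an ambient Hamiltonian isotopy via the Weinstein neighborhood theorem; alternatively, translation in the $z$-direction is the genuine Hamiltonian flow of $H=e^t$ and displaces the compact $L_\Lambda$. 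Also, your doubling is not quite right geometrically: both ends of a $t$-translated copy run to $t=-\infty$, so the second copy must be \emph{reflected} (the paper's construction is invariant under $((t,z),(\mathbf{p},\mathbf{q}))\mapsto((-t,-z),(\mathbf{p},\mathbf{q}))$), and the double is necessarily immersed, with double points exactly over the Reeb chords $\mathcal{Q}(\Lambda)$ at $t=0$ --- an embedded closed exact displaceable Lagrangian in $T^*(\RR\times M)$ would contradict Gromov's theorem, and the bijection between double points of $L_\Lambda$ and $\mathcal{Q}(\Lambda)$ is precisely what makes Lemma \ref{lem:equiv} meaningful. You never say where the double points come from, which is the heart of the construction.

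For non-uniruledness, the pure Stokes mechanism cannot ``rule out a uniform area bound'', and indeed no such statement is true: the discs computing the DGA of $L_\Lambda$ have corners at the double points in $\{t=0\}$ and their areas \emph{are} uniformly bounded by the maximal chord action. Stokes only kills cornerless discs and discs confined to the region where the primitive vanishes. What the paper actually proves (the unnumbered Lemma before Lemma \ref{lem:equiv}) is a localization statement: for the split structure $i\oplus J$, every disc with one positive puncture is contained in the plane $\{t=0\}$, via the holomorphic-projection argument of \cite[Lemmas 4.14, 4.15]{OrientLeg} combined with the monotonicity/neck-lengthening argument from the proof of Proposition \ref{prop:width}. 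It is this confinement --- no bounded-area disc reaches points of $L_\Lambda$ with $t\neq 0$, and any Baire set of almost complex structures contains structures arbitrarily close to $i\oplus J$ --- that contradicts uniruledness, not a failure of area bounds. Relatedly, your guess that Lemma \ref{lem:equiv} requires SFT neck-stretching and matching of broken configurations overshoots: the same split-structure localization identifies the rigid moduli spaces on the nose with those of $\Lambda$ in $\{(0,0)\}\times T^*M$, so no gluing analysis is needed. Since you granted Lemma \ref{lem:equiv}, this last point is not a gap in your argument, but the displaceability justification and the non-uniruledness step are, and both need the Proposition \ref{prop:width} machinery rather than the shortcuts you propose.
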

\begin{rmk}
The exact caps constructed in \cite{LagCaps} all have a negative end being a half cylinder over a so-called loose Legendrian submanifold, see \cite{LooseLeg}. By an explicit computation (\cite[Section 8]{LooseLeg} and \cite[Section 4.3]{NonIsoLeg}) it follows that the DGA of a loose Legendrian submanifold, either using Novikov coefficients or not, is acyclic.
\end{rmk}
\begin{rmk} There is still room for a non-trivial Legendrian contact homology \emph{with Novikov coefficients} for a Legendrian submanifold satisfying the above assumptions. The knotted Legendrian torus $L_{1,1} \subset J^1(\RR^2)$ constructed in \cite{KnottedLeg} has an acyclic DGA without Novikov coefficients, but its DGA is not acyclic when using Novikov coefficients. However, it can be seen that there is a proper embedded exact Lagrangian cobordism $V \subset \RR \times J^1(\RR^2)$ inside the symplectisation coinciding with
\[((-\infty,-N] \times L_{1,1}) \cup ([N,+\infty) \times \Lambda_0),\]
for some $N>0$, outside of a compact set, where $\Lambda_0$ is the \emph{loose Legendrian two-sphere} (see \cite{LooseLeg}). Since the loose two-sphere admits an exact Lagrangian cap by \cite{LagCaps}, so does $L_{1,1}$ by concatenating the cap with the above cobordism. In particular, admitting a cap does not imply being loose.

To construct $V$ one can proceed as follows. A Legendrian knot $K \subset J^1(\RR)$ which is symmetric under a reflection
\[(q,p,z) \mapsto (C-q,-p,z)\]
can be rotated around its point of symmetry to form an embedded Legendrian surface
\begin{gather*}
S^1 \times K \to J^1(\RR^2) \simeq \RR^2 \times \RR^2 \times \RR\\
(\theta,k) \mapsto \left( ((q(k)-C/2)\cos \theta,(q(k)-C/2)\sin \theta), (p(k) \cos \theta,p(k)\sin \theta),z(k)\right)
\end{gather*}
whose image is either a torus or a sphere. $L_{1,1}$ is obtained as the rotation of the Legendrian knot in $J^1(\RR)$ shown in Figure \ref{fig:torus}. Rotating the Legendrian one-disk $D$ shown in the same figure we obtain a Legendrian two-disk with boundary on $L_{1,1}$. This Legendrian disk determines an ambient Legendrian one-sugery on $L_{1,1}$ as defined in \cite{LegAmbient}, which produces the rotation of the Legendrian knot shown in Figure \ref{fig:loose}. The resulting Legendrian surface is in fact the loose two-sphere $\Lambda_0$. Finally, the construction of an ambient Legendrian surgery in \cite{LegAmbient} provides an exact Lagrangian cobordism $V$ as required which moreover is diffeomorphic to the corresponding two-handle attachment.
\end{rmk}

\begin{figure}[htp]
\centering
\labellist
\pinlabel $D$ at 82 20
\pinlabel $S$ at 74 43

\pinlabel $q$ at 202 75
\pinlabel $z$ at 169 108
\pinlabel $q$ at 202 8
\pinlabel $p$ at 169 43
\endlabellist
\includegraphics{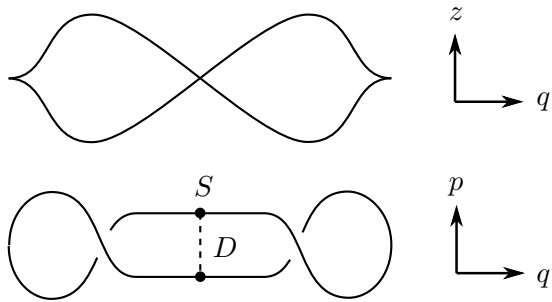}
\caption{Before the surgery.}
\label{fig:torus}
\end{figure}

\begin{figure}[htp]
\centering
\labellist
\pinlabel $q$ at 202 74
\pinlabel $z$ at 169 107
\pinlabel $q$ at 202 8
\pinlabel $p$ at 170 43
\endlabellist
\includegraphics{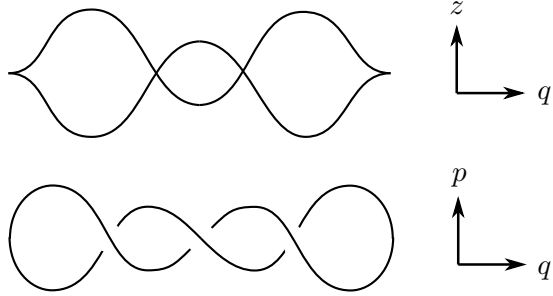}
\caption{After the surgery.}
\label{fig:loose}
\end{figure}

\begin{rmk}
The construction in Section \ref{sec:double} can be generalised to work for a Legendrian submanifold in a general contactisation $(P \times \RR, dz+\theta)$ admitting an exact Lagrangian cap in the symplectisation. The same computation should work in this setting as well, thus proving the above corollary for a general contactisation of a Liouville domain.
\end{rmk}

\subsection{Existence of exact Lagrangian caps and constructions}
\label{sec:constr}
In \cite{LooseLeg} Murphy introduced the concept of a \emph{loose} Legendrian $n$-dimensional submanifold for $n \ge 2$. Moreover, loose Legendrian submanifolds where shown to satisfy an h-principle.

Assume that we are given a loose Legendrian submanifold $\Lambda \subset (Y,\lambda)$. Using the above h-principle, it was shown in \cite{LagCaps} by Eliashberg and Murphy that, under some homotopy-theoretic assumptions, an exact immersed Lagrangian cap of $\Lambda$ is Lagrangian regular homotopic through exact immersions to an embedded exact Lagrangian cap. Recall that exact Lagrangian immersions satisfy an h-principle by Gromov \cite{PartialDiffRel} and Lees \cite{ClassificationLagrangeImmersions} but that there is no such h-principle for general Lagrangian embeddings. 

Ekholm, Eliashberg, Murphy and Smith \cite{ImmersionsFew} used the above h-principle for exact Lagrangian caps of loose Legendrian submanifolds to construct many interesting examples of Lagrangian embeddings inside $\CC^n$ for each $n \ge 3$. These examples are constructed by considering an embedded exact Lagrangian cap inside
\[(\RR \times S^{2n-1}, d(e^t \lambda_{std})) \]
where $\lambda_{std}:=\frac{1}{2}(x_idy^i-y_idx^i)$ is the standard contact one-form on $S^{2n-1} \subset \CC^n$. This cap is then concatenated 
with a (non-exact) Lagrangian filling inside 
\[\left(\CC^n,\omega_0=d\left(\frac{1}{2}(x_idy^i-y_idx^i)\right)\right).\]
Observe that the result is a closed Lagrangian submanifold of the standard symplectic $(\CC^n,\omega_0)$ which hence is displaceable. Their constructions were shown to satisfy many surprising properties.

We now describe their examples inside $\CC^3$. Starting with the loose Legendrian two-sphere $\Lambda_0 \subset (S^5,\lambda_0)$ and given any three-manifold $M$, it follows from the theory in \cite{LagCaps} that there is an exact Lagrangian cap inside the symplectisation $(\RR \times S^5,d(e^t\lambda_{std}))$ which is diffeomorphic to $M \setminus \{ pt\}$ and which moreover has vanishing Maslov class.

A standard construction (see \cite{ImmersionsFew}) shows that $\Lambda_0$ also admits an exact \emph{immersed} filling with a single double-point. Observe that this filling indeed must have double-points, since the concatenation of the cap and the filling otherwise would be a closed exact Lagrangian submanifold of $\CC^3$, thus contradicting Gromov's theorem \cite{Gromov}. Finally, it can be checked that the Legendrian contact homology grading of this double-point must be equal to one in this case. We refer to \cite{ContHomR} for the definition of this grading.

The constructed exact Lagrangian immersion of $M$ is itself interesting, since their number of double-points in general are not bounded from below by $\frac{1}{2}\dim H(M;\ZZ_2)$. By \cite[Theorem 1.2]{DualityLeg} this bound holds for the number of transverse double-points for any exact Lagrangian immersion in $\CC^n$ whose Legendrian lift has a Legendrian contact homology DGA admitting an augmentation.

The above double-point of the exact immersed Lagrangian filling of $\Lambda_0$ can be removed by a Polterovich surgery defined in \cite{LagrangeSurgery}. A suitable choice of such a surgery  produces a (non-exact) embedded Lagrangian filling $L_{\emptyset,\Lambda_0}$ of $\Lambda_0$ diffeomorphic to $S^2 \times S^1 \setminus \{pt\}$. The fact that the grading of the double-point is equal to one implies that $L_{\emptyset,\Lambda_0}$ has vanishing Maslov class for this choice of Polterovich surgery.

In conclusion the construction produces, for every three-manifold $M$, a Lagrangian embedding of $M \# (S^2 \times S^1)$ in $\CC^3$ with vanishing Maslov class. These examples provided the first known examples of Lagrangian submanifolds of $\CC^n$ with this property. Moreover, by construction, these Lagrangian submanifolds satisfy the assumptions of Proposition \ref{prop:width}.

In higher dimensions, the same technique give Lagrangian embeddings of $S^1 \times S^{2k}$ in $\CC^{1+2k}$ for $k>0$, see \cite[Corollary 1.6]{ImmersionsFew}. Furthermore, the Maslov class evaluated on the unique generator $\gamma \in H_1(S^1 \times S^{2k};\ZZ)$ having positive symplectic action takes the value $2k-2$ for these embeddings.

\begin{rmk}
\label{rem:orient} A Polterovich surgery on a non-uniruled exact Lagrangian immersion $L \subset \CC^{n}$ produces a non-orientable Lagrangian submanifold when $n=2k$. To see this, observe that the Polterovich surgery on a double-point of odd grading always creates a non-orientable submanifold in these dimensions. Moreover, the non-uniruledness of $L$ together with Theorem \ref{thm:uniruled} implies that there must be a double-point of odd degree, since otherwise the DGA would not be acylic. To that end, observe that the differential decreases the degree by one and that the unit is of degree zero.
\end{rmk}

\begin{rmk}
\label{rem:spin}
The examples produced by Corollary \ref{cor:width} are constructed by a version of the front-spinning construction, but it can be seen that they still are obtained by the concatenation of a cap and filling. Namely, suppose that we are given an exact Lagrangian cap 
\[L_{\Lambda,\emptyset} \subset \RR \times J^1(\RR^n)\]
and a non-exact Lagrangian filling
\[L_{\emptyset,\Lambda} \subset \RR \times J^1(\RR^n).\]
The front-spinning construction produces a Legendrian embedding of $\Lambda \times S^l$ inside $J^1(\RR^{n+l})$. This spinning extends to the symplectisation by \cite{NoteSpin}, producing an exact Lagrangian cap
\[L_{\Lambda \times S^l,\emptyset} \subset \RR \times J^1(\RR^{n+l})\]
and a non-exact Lagrangian filling
\[L_{\emptyset,\Lambda \times S^l} \subset \RR \times J^1(\RR^{n+l}),\]
diffeomorphic to $L_{\Lambda,\emptyset} \times S^l$ and $L_{\emptyset,\Lambda} \times S^l$, respectively.
\end{rmk}

\section{Proof of Proposition \ref{prop:width}}
Consider a smooth cut-off function $\rho \colon [0,+\infty) \to \RR$ which satisfies $\rho'(t) \ge 0$ for all $t$, $\rho(t)=0$ for $t \le N$, and $\rho(t)=1$ for $t \ge N+\epsilon$.
The flow
\begin{gather*}
\phi^s \colon [0,+\infty) \times Y \to [0,+\infty) \times Y\\
(t,y) \mapsto (t+\rho(t)s,y)
\end{gather*}
on the cylindrical end of $\overline{X}$ satisfies
\[ (\phi^s)^* (e^t\lambda) =e^{\rho(t)s}e^t\lambda \]
and coincides with the Liouville flow on the set $[N+\epsilon,+\infty) \times Y$.

The condition that
\[ [N,N+\epsilon] \times \Lambda \subset (\RR \times Y,d(e^t\lambda))\]
is a Lagrangian submanifold is equivalent to the pull-back of $e^t\lambda$ being closed. Since
\[ d(e^t\lambda)=e^t dt \wedge \lambda +e^t d\lambda\]
it follows that the pull-back of $\lambda$ to this Lagrangian cylinder actually vanishes. This shows that $(\phi^s)^*(e^t \lambda)$ is closed on $L_{cap}$ and, since $\phi^s=\id$ when restricted to $L \setminus L_{cap}$, it follows that
\[L_s := \phi^s(L)\]
is an isotopy through Lagrangian submanifolds.

Moreover, writing $f \colon L \to \RR$ for the primitive of $e^t\lambda$ pulled-back to $L_{cap}$ vanishing along the boundary, the function $e^s f$ is obviously a primitive of $e^t\lambda$ pulled back to $L_{cap}$ under $\phi^s$ (which thus also vanishes along the boundary). Since the $\phi^s$ has support in the complement of $L \setminus L_{cap}$, it follows that 
\[ (\phi^s|_L)^*(\alpha) = (\phi^0|_L)^*\alpha + d((e^s-1)f)\]
is a path of cohomologous one-forms. By the Weinstein Lagrangian neighbourhood theorem it now follows that the isotopy $L_s$ may be realised by a time-dependent Hamiltonian isotopy. 

Assume that $L$ is uniruled, i.e.~that there is a Baire set of compatible almost complex structures on $\overline{X}$ for which there are non-constant pseudo-holomorphic discs passing through every point of $L$ and having symplectic area bounded by $A>0$.

First, observe that the above exactness property rules out the existence of $J$-holomorphic discs with boundary on $L$ contained entirely in $\{ t \ge N\}$. Take a compatible almost complex structure $J$ which is invariant under translations of the $t$-coordinate on the set $\{t \ge N\}$. The monotonicity property for the symplectic area of $J$-holomorphic discs with boundary on a Lagrangian submanifold (see \cite{SomeProp}) shows that there is a sufficiently large $s \gg 0$ for which all $J$-holomorphic discs in $\overline{X}$ with boundary on $L_s$ and boundary points passing through both $L_s \cap \{ t=N \}$ and $L_s \cap \{ t =N+s \}$ have symplectic area at least $A+1$. Observe that the same is true for any other almost complex structure sufficiently $C^l$-close to $J$ for any $l>2$. Since $L_s$ is Hamiltonian isotopic to $L$, this leads to a contradiction.

For the statement concerning the Gromov width, observe that the Weinstein Lagrangian neighborhood theorem gives a symplectic embedding of the standard symplectic ball of some positive radius $r>0$ intersecting $L$ precisely in the real part and which is contained in the set $\{ t \ge N+\epsilon\}$. By composing this embedding with $\phi^s$, we see that $L_s$ admits such a symplectic ball of radius $e^sr$. Finally, since $L_s$ is Hamiltonian isotopic to $L$, it follows that $L$ admits a symplectic ball of radius $e^sr$ as well. In conclusion, $L$ has infinite Gromov width.

\section{The Legendrian contact homology DGA of the negative end of an exact Lagrangian cap.}
A path of Legendrian embeddings is called a \emph{Legendrian isotopy}. Legendrian contact homology is an algebraic Legendrian isotopy invariant introduced in \cite{IntroSFT} by Eliashberg, Hofer and Givental and in \cite{DiffAlg} by Chekanov. Here we will use the version defined in \cite{OrientLeg} and \cite{ContHomP} by Ekholm, Etnyre and Sullivan, which is defined for the standard contact form on the contactisation $(P \times \RR,dz+\theta)$ given some technical assumptions on $(P,d\theta)$. In particular, the construction works when $(P,d\theta)$ is symplectomorphic to the completion of a Liouville domain.

\subsection{Legendrian contact homology}
\label{sec:lch} We will use $\mathcal{Q}(\Lambda)$ to denote the set of double-points of the image of $\Lambda$ under the canonical projection $P \times \RR \to P$, also called \emph{Reeb chords} on $\Lambda$. We will now give a sketch of the definition of Legendrian contact homology in this setting, and we refer to \cite{ContHomP} for more details.

The Chekanov algebra $(\mathcal{A}(\Lambda;R),\partial)$ is a DGA associated to a Legendrian submanifold $\Lambda$. The underlying algebra is the unital, non-commutative, graded algebra freely generated by $\mathcal{Q}(\Lambda)$ over a ring $R$. Here we assume that the projection of $\Lambda$ is a generic immersion having transverse double-points, which means that $\mathcal{Q}(\Lambda)$ is a finite set. The grading of each generator is induced by the so-called Conley-Zehnder index.

Fix a generic compatible almost complex structure $J$ on $P$. The differential $\partial$ is defined on generators by a count of rigid $J$-holomorphic polygons in $P$ having boundary on the projection of $\Lambda$ and boundary-punctures mapping to double-points, of which exactly one is positive. The differential is then extended to all of $\mathcal{A}(\Lambda)$ by the Leibniz rule
\[\partial(ab)=\partial(a)b+(-1)^{|a|}a\partial(b).\]

We may always use coefficients $R=\ZZ_2$. In the case when $\Lambda$ is spin, we may also use $R=\ZZ$ after fixing a spin structure (see \cite{OrientLeg}). Hence we can work over an arbitrary ring $R$ in the latter case. It is also possible to define a richer invariant using so-called Novikov coefficients, where coefficients are chosen in the group ring $R[H_1(L)]$.

We have that $\partial^2=0$ by \cite[Lemma 2.5]{ContHomP}. By \cite[Theorem 1.1]{ContHomP} it follows that the stable-tame isomorphism type of the Chekanov algebra is invariant under the choice of a generic $J$ and Legendrian isotopy of $\Lambda$. In particular, its homology $HC_\bullet(\Lambda;R)$ is a Legendrian isotopy invariant.

\subsection{A linear complex over the characteristic algebra}
\label{sec:char}
Given a decomposition $\Lambda=\Lambda_1 \cup \Lambda_2$, where $\Lambda_i$ is not necessarily connected, we will use $\mathcal{Q}(\Lambda_1,\Lambda_2)$ to denote the Reeb chord starting on $\Lambda_1$ and ending on $\Lambda_2$. By the topology of the discs in the definition of the differential, for $c \in\mathcal{Q}(\Lambda_1,\Lambda_2)$ we have that $\partial(c)$ is a linear combination of words each containing at least one generator from $\mathcal{Q}(\Lambda_1,\Lambda_2)$ as well. 

In particular, the natural free $\mathcal{A}(\Lambda_2;R)\otimes\mathcal{A}(\Lambda_1;R)^{op}$-submodule (note the order!)
\[ \mathcal{A}(\Lambda_1,\Lambda_2;R):= \oplus_{c \in \mathcal{Q}(\Lambda_1,\Lambda_2)} \mathcal{A}(\Lambda_1;R) c \mathcal{A}(\Lambda_2;R) \subset \mathcal{A}(\Lambda_1 \cup \Lambda_2;R)\]
generated by $\mathcal{Q}(\Lambda_1,\Lambda_2)$ is a sub-complex, i.e.~we have used the natural identification
\[ \mathcal{A}(\Lambda_1;R) c \mathcal{A}(\Lambda_2;R) \simeq \mathcal{A}(\Lambda_2;R) \otimes \mathcal{A}(\Lambda_1;R)\]
that changes the order of the factors. The above invariance result implies that the homotopy type of $\mathcal{A}(\Lambda_1,\Lambda_2;R)$ is invariant under Legendrian isotopy.

Obviously, $\partial$ is restricted to $\mathcal{A}(\Lambda_1,\Lambda_2;R)$ is not $\mathcal{A}(\Lambda_2;R)\otimes\mathcal{A}(\Lambda_1;R)^{op}$-linear in general. We will amend this problem by taking a suitable quotient of the coefficient ring.

In the following we will assume that $\mathcal{A}(\Lambda_1;R) \simeq \mathcal{A}(\Lambda_2;R)$ are isomorphic as DGAs. This is for example true if $\Lambda_2$ is a small enough perturbation of a translation of $\Lambda_1$ in the $z$-coordinate. We use
\[\mathcal{C}_{\Lambda_i;R}:=\mathcal{A}(\Lambda_i;R)/\mathcal{A}(\Lambda_i;R)(\mathrm{im} \partial)\mathcal{A}(\Lambda_i;R),\:\:i=1,2\]
to denote the so-called \emph{characteristic algebra} of $\mathcal{A}(\Lambda_i;R)$, which is a quotient by the two-sided ideal generated by the boundaries. This invariant was introduced and studied in \cite{Computable} by Ng. Observe that a DGA is non-trivial if and only if its characteristic algebra is non-trivial, since we are considering unital algebras.

Since $\mathcal{C}_{\Lambda_1;R} \simeq \mathcal{C}_{\Lambda_2;R}$ by assumption, we will use $\mathcal{C}_R$ to denote either of them. We will consider the natural (non-free) $\mathcal{C}_R \otimes_R \mathcal{C}_R^{op}$-module
\[\mathcal{C}(\Lambda_1,\Lambda_2;R):=\oplus_{c \in \mathcal{Q}(\Lambda_1,\Lambda_2)}\mathcal{C}_Rc.\]
Consider the surjective algebra map
\[\Phi \colon \mathcal{A}(\Lambda_2;R) \otimes_R \mathcal{A}(\Lambda_1;R)^{op} \to \mathcal{C}_R \otimes_R \mathcal{C}_R^{op},\]
which for generators $c_i \in \mathcal{Q}(\Lambda_i)$, $i=1,2$, takes the form
\begin{gather*}
\Phi(c_2 \otimes 1) = [c_2] \otimes [1],\\
\Phi(1 \otimes c_1) = [1] \otimes [-c_1].
\end{gather*}

Under the algebra map $\Phi$, the boundary $\partial$ restricted to $\mathcal{A}(\Lambda_1,\Lambda_2;R)$ descends to a differential $\partial_{\mathcal{C}}$ on the $\mathcal{C}_R \otimes_R \mathcal{C}_R^{op}$-module $\mathcal{C}(\Lambda_1,\Lambda_2;R)$. Finally, the Leibniz rule implies that $\partial_{\mathcal{C}}$ is $\mathcal{C}_R \otimes_R \mathcal{C}_R^{op}$-linear.

\begin{thm}[Theorem 1.1 \cite{ContHomP}]
\label{thm:invariance}
The homotopy type of $(\mathcal{C}(\Lambda_1,\Lambda_2;R),\partial_\mathcal{C})$ is independent of the choice of a regular almost complex structure and invariant under Legendrian isotopies of $\Lambda_1 \cup \Lambda_2$ that induce no births or deaths of Reeb chords on either $\Lambda_i$.
\end{thm}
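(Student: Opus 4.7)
The plan is to derive Theorem \ref{thm:invariance} from the DGA invariance of $\mathcal{A}(\Lambda_1 \cup \Lambda_2;R)$ proven in \cite[Theorem 1.1]{ContHomP}, by verifying that the sub-complex construction $\mathcal{A}(\Lambda_1,\Lambda_2;R)$ and its descent $\mathcal{C}(\Lambda_1,\Lambda_2;R)$ to the characteristic algebra are functorial with respect to the DGA morphisms and chain homotopies arising in that invariance proof. The key observation is that the same topological parity constraint on holomorphic disc boundaries that makes $\mathcal{A}(\Lambda_1,\Lambda_2;R)$ a sub-complex also controls the behaviour of all relevant cobordism-type chain maps.

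First, I would let $\Lambda^s = \Lambda_1^s \cup \Lambda_2^s$, $s \in [0,1]$, denote either a generic Legendrian isotopy with no births or deaths on either $\Lambda_i$, or a constant Legendrian with a one-parameter family of generic compatible almost complex structures. The cited invariance result assembles from such a family a stable tame isomorphism $\psi \colon \mathcal{A}(\Lambda^0;R) \to \mathcal{A}(\Lambda^1;R)$ out of DGA morphisms whose structure coefficients count rigid holomorphic curves in bifurcation families or in a Liouville cobordism. The parity argument that makes $\mathcal{A}(\Lambda_1,\Lambda_2;R)$ a sub-complex, namely that the boundary of such a curve alternates between arcs on $\Lambda_1$ and arcs on $\Lambda_2$ with switches precisely at mixed chord punctures, applies verbatim to each of these morphisms. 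Consequently, $\psi$ sends pure $\Lambda_i$ chords into $\mathcal{A}(\Lambda_i^1;R)$ and mixed chords into $\mathcal{A}(\Lambda_1^1,\Lambda_2^1;R)$, restricting to DGA morphisms $\psi_i \colon \mathcal{A}(\Lambda_i^0;R) \to \mathcal{A}(\Lambda_i^1;R)$ and to a $(\psi_2,\psi_1)$-equivariant chain map on the mixed sub-complex.

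Next I would descend to the characteristic algebra. Since $\psi_i$ is a DGA morphism it sends the two-sided ideal generated by boundaries into itself and hence induces an algebra morphism $\bar\psi_i \colon \mathcal{C}_{\Lambda_i^0;R} \to \mathcal{C}_{\Lambda_i^1;R}$; under the identifications used in Section \ref{sec:char} these combine into a morphism of the pair $(\mathcal{C}_R,\mathcal{C}_R^{op})$. Applying $\Phi$ to the restriction of $\psi$ yields a $\mathcal{C}_R \otimes_R \mathcal{C}_R^{op}$-equivariant map
\[ \bar\psi \colon (\mathcal{C}(\Lambda_1^0,\Lambda_2^0;R),\partial_\mathcal{C}) \to (\mathcal{C}(\Lambda_1^1,\Lambda_2^1;R),\partial_\mathcal{C}), \]
with equivariance and the chain map property following from the same Leibniz-rule computation that already produced $\partial_\mathcal{C}$ from $\partial$. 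The stable tame isomorphism comes with a homotopy inverse $\psi^{-1}$ and DGA chain homotopies $H, H'$ satisfying $\psi^{-1}\psi - \mathrm{id} = \partial H + H \partial$ and analogously on the other side, each assembled from the same type of bifurcation count. The identical parity-preservation argument restricts $H$ and $H'$ to the mixed sub-complex and lets them descend to $\mathcal{C}_R \otimes_R \mathcal{C}_R^{op}$-equivariant chain homotopies on $\mathcal{C}(\Lambda_1,\Lambda_2;R)$, giving the sought homotopy equivalence in both the Legendrian-isotopy and the almost-complex-structure cases.

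The main obstacle I anticipate is the careful verification that every ingredient in the proof of \cite[Theorem 1.1]{ContHomP} — not only the holomorphic disc counts in handle slides and in one-parameter cobordism moduli, but also the auxiliary tame changes of variables that organise the maps into a stable tame isomorphism — respects the filtration by mixed chord content in the strong sense required for descent to $\mathcal{C}(\Lambda_1,\Lambda_2;R)$. The underlying topological argument is elementary, but it has to be threaded through each step of the construction, and in particular one must check that the tame changes of variables act triangularly with respect to the pure/mixed splitting of generators, and not merely with respect to Reeb chord action.
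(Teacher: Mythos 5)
Your proposal is correct and takes essentially the same route as the paper: the paper likewise deduces the theorem as a purely algebraic consequence of the invariance of the Chekanov DGA, citing \cite[Lemma 4.13]{OrientLeg} (which the no-birth/death hypothesis makes applicable, so the pure DGAs $\mathcal{A}(\Lambda_i;R)$ change by honest isomorphisms) together with \cite[Theorem 3.4]{Computable} for the invariance of the characteristic algebra coefficients, and then declares the homotopy invariance of $(\mathcal{C}(\Lambda_1,\Lambda_2;R),\partial_\mathcal{C})$ a ``straight-forward algebraic consequence.'' The parity-preservation argument you thread through the cobordism maps, tame changes of variables, and chain homotopies, followed by descent through $\Phi$, is exactly the verification the paper leaves implicit in that last step.
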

\begin{proof}
This result is just an algebraic consequence of the invariance result \cite[Theorem 1.1]{ContHomP} which, in turn, depends on \cite[Section 4.3]{OrientLeg}.

The isomorphism class of the characteristic algebra $\mathcal{C}_{\Lambda_i;R}$ is obviously independent of the isomorphism class of the Chekanov algebra $(\mathcal{A}(\Lambda_i;R),\partial)$ (see Theorem \cite[Theorem 3.4]{Computable} for a more refined result). The latter isomorphism class is independent of the choice of a generic compatible almost complex structure and invariant under Legendrian isotopy of $\Lambda_i$, given that there are no births or deaths of Reeb chords on $\Lambda_i$, as follows by \cite[Lemma 4.13]{OrientLeg}.

We have shown that the coefficients in the complex $(\mathcal{C}(\Lambda_1,\Lambda_2;R),\partial_\mathcal{C})$ are invariant under the deformations in the statement of the theorem. That the homotopy type of the complex itself is invariant is now a straight-forward algebraic consequence of the invariance result for the Chekanov DGA.
\end{proof}

\subsection{Construction of the exact Lagrangian immersion $L_\Lambda$}
\label{sec:double}
In the following we assume that the Legendrian submanifold $\Lambda \subset J^1(M)$ admits an exact Lagrangian cap $L_{\Lambda,\emptyset}$ inside the symplectisation
\[(\RR \times J^1(M), d(e^t(dz+\theta_M))\]
which moreover is cylindrical in the set $\{ t \le 1 \}$. Observe that there is a symplectic identification
\begin{gather*}
(\RR \times T^*M \times \RR, d(e^t(dz+\theta_M))) \to (T^*(\RR_{>0} \times M),d\theta_{\RR_{>0} \times M}),\\
(t,(\mathbf{q},\mathbf{p}),z) \mapsto ((e^t,\mathbf{q}),(z,e^t\mathbf{p})).
\end{gather*}

Let $\pi_{Lag}(\Lambda) \subset T^*M$ denote the image of $\Lambda$ under the corresponding canonical projection and observe that this is an exact Lagrangian immersion. Consider the exact Lagrangian immersion 
\[ [-1,1] \times \Lambda \to [-1,1] \times \RR \times T^*M \simeq T^*([-1,1] \times M) \]
satisfying the following.
\begin{itemize}
\item The image under the canonical projection to $[-1,1] \times T^*M$ coincides with $[-1,1] \times \pi_{Lag}(\Lambda)$.
\item The image intersected with $\{0\} \times \RR \times T^*M$  coincides with $\{0\} \times \{ 0 \} \times \pi_{Lag}(\Lambda)$.
\item The pull-back of the Liouville form to $[-1,1] \times \Lambda$ has a primitive on the form $(1+\varphi(t))g$, where $g$ is a real-valued function on $\Lambda$ and $\varphi \colon [-1,1] \to \RR_{>0}$ is an even non-zero Morse function which coincides with $\pm t$ in a neighbourhood of $t=\pm 1$ and which has its only critical point at $t=0$ (which thus is a non-degenerate minimum).
\end{itemize}
See \cite[Section 10.2]{ContHomR} and \cite[Section 4.3.2]{OrientLeg} for the construction of a similar exact Lagrangian immersion. Let $L_\Lambda$ denote the closed manifold obtained by gluing the cap to itself along its end. It is possible to extend the above exact Lagrangian immersion to an exact Lagrangian immersion
\[ \varphi \colon L_\Lambda \to \RR \times \RR \times T^*M \simeq T^*(\RR \times M)\]
whose image is invariant under the symplectomorphism
\[ ((t,z),(\mathbf{p},\mathbf{q})) \mapsto ((-t,-z),(\mathbf{p},\mathbf{q}))\]
and which furthermore satisfies
\begin{itemize}
\item $\varphi(L_{\Lambda}) \cap ([-1,1] \times \RR \times T^*M)$ coincides with the above Lagrangian immersion.
\item $\varphi$ has no double-points outside of $\{ 0\} \times \RR \times T^*M$.
\item $\varphi(L_{\Lambda}) \cap T^*([1,+\infty) \times M)=L_{\Lambda,\emptyset} \cap T^*([1,+\infty) \times M)$.
\end{itemize}
For the last part, we have used the above identification of the symplectisation with the corresponding cotangent bundle.

\begin{lma}{\cite[Lemmas 4.14, 4.15]{OrientLeg}} Every $i \oplus J$-holomorphic disc in
\[ \CC \times T^*M \simeq \RR \times T^*M \times \RR\]
having boundary on $L_\Lambda$ and exactly one positive puncture is contained inside the plane $\{ t=0 \}$.
\end{lma}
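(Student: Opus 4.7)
The argument I would give follows Ekholm--Etnyre--Sullivan \cite[\S 4.3.2]{OrientLeg} closely; I outline the key steps. The basic observation is that the almost complex structure splits as $i\oplus J$ with respect to $\CC\times T^*M$, so the coordinate function $f:=t+iz$ is $(i\oplus J)$-holomorphic. Hence $f\circ u$ is a holomorphic map from the disc to $\CC$, and both $t\circ u$ and $z\circ u$ are harmonic on the (punctured) disc.

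I would begin by using the three bullet points of the construction to make $L_\Lambda$ explicit on the central slab $\{|t|\le 1\}$: the exactness condition with primitive $(1+\varphi(t))g(\lambda)$, together with the constraint that the canonical projection of the image to $[-1,1]\times T^*M$ equals $[-1,1]\times\pi_{Lag}(\Lambda)$, pins down the $z$-coordinate of the immersion to $\varphi'(t)g(\lambda)$. Since $\varphi'$ vanishes only at $t=0$, the slice $L_\Lambda\cap\{t=0\}$ lies over $0\in\CC$ and consists of $\{(0,0)\}\times\pi_{Lag}(\Lambda)$; in particular every double-point sits here and the positive puncture forces $f\circ u\to 0$ at that puncture. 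A short Stokes computation with the primitive $(1+\varphi(t))g$ then shows that the symplectic area of $u$ equals $(1+\varphi(0))$ times the action of the corresponding Reeb chord of $\pi_{Lag}(\Lambda)$, providing the area bound needed for SFT compactness.

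The decisive step exploits the $\ZZ_2$-involution $\sigma(t,z,x)=(-t,-z,x)$. On the $\CC$-factor it acts as multiplication by $-1$, so it is simultaneously holomorphic and symplectic; by construction it also preserves $L_\Lambda$ setwise and fixes every double-point. Consequently $\sigma\circ u$ is another $(i\oplus J)$-holomorphic disc with boundary on $L_\Lambda$ and exactly one positive puncture at the same Reeb chord. The Fredholm and transversality analysis of \cite[Lemma~4.14]{OrientLeg} shows that the relevant moduli space is rigid modulo the natural reparametrization group, which for a disc with one boundary puncture is the two-dimensional affine group $\{z\mapsto az+b:a>0,\ b\in\RR\}$; hence $\sigma\circ u=u\circ\alpha$ for some $\alpha$ in that group. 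Iterating gives $\alpha^2=\mathrm{id}$, but the only involution in this affine group is $\alpha=\mathrm{id}$, so $\sigma\circ u=u$ outright. This means the image of $u$ lies in the fixed locus $\mathrm{Fix}(\sigma)=\{t=0,z=0\}\times T^*M$, which is contained in $\{t=0\}$ as desired.

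I expect the main obstacle to be this rigidity and symmetry step. A direct maximum-principle argument on $t\circ u$ is not enough, since the image $f(L_\Lambda)$ is genuinely two-dimensional in $\CC$ and $f\circ u$ need not be constant even for discs already contained in $\{t=0\}$. Forcing $\sigma$-invariance requires the full Fredholm index computation together with the observation that any non-invariant disc would produce a second distinct element of the rigid moduli space --- precisely the analysis carried out in \cite{OrientLeg}.
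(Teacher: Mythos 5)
Your decisive step --- deducing $\sigma\circ u=u\circ\alpha$ from rigidity of the moduli space --- is a non sequitur, and it is where the proof breaks. A zero-dimensional moduli space is generically a \emph{finite set with several elements} (these are exactly what the differential counts); the involution $\sigma$ merely permutes this set, so $\sigma\circ u$ may perfectly well be a holomorphic disc distinct from $u$, and nothing forces any element to be a fixed point. If the argument were valid it would show that every rigid disc on \emph{any} Lagrangian invariant under an anti-symplectic-free involution of this type lies in the fixed locus, which is false in general. Two further problems: the lemma quantifies over \emph{all} discs with one positive puncture --- any number of negative punctures and any Fredholm index, since it is needed to identify the full differentials geometrically --- so an argument restricted to rigid configurations cannot reach the statement; and to make $\sigma\circ u$ holomorphic you must take $J$ invariant under $\sigma$, and generic transversality within this invariant class is precisely what you may not assume (in particular not for discs contained in the fixed locus), so the ``rigid modulo reparametrization'' input is unavailable.

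Your reason for dismissing the maximum-principle route is also incorrect, and in fact the paper's proof is exactly that route. If $t\circ u\equiv 0$, then $f\circ u=t\circ u+i\,z\circ u$ is holomorphic with constant real part, hence constant; so discs in $\{t=0\}$ \emph{do} have constant $\CC$-projection (equal to $0$, since $L_\Lambda\cap\{t=0\}\subset\{z=0\}$), and your objection that $f\circ u$ could be non-constant there is wrong. The paper invokes the harmonicity/maximum-principle argument of \cite[Lemmas 4.14, 4.15]{OrientLeg}, which applies over the slab $\{|t|\le 1\}$ where the boundary condition has the special form $z=\varphi'(t)g$; what that argument cannot control, and what your proposal does not address at all, is the region $\{|t|\ge 1\}$, where $L_\Lambda$ is an arbitrary exact cap and the $\CC$-projection of the boundary condition is a genuinely two-dimensional, unconstrained set. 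This is handled by the second ingredient the paper cites: the monotonicity/stretching argument from the non-uniruledness part of the proof of Proposition \ref{prop:width}. Exactness of the cap, with primitive vanishing where it meets the cylindrical collar, makes the action/area bound for discs with a fixed positive puncture independent of the stretching, while monotonicity forces any disc crossing the stretched collar to have arbitrarily large area; hence no disc with one positive puncture enters $\{|t|\ge 1\}$, and the slab argument finishes the proof. Your Stokes computation produces exactly the needed area bound, but you use it only for compactness; without the monotonicity step nothing in your argument prevents a disc from dipping into the cap regions.
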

\begin{proof}
This follows from the argument in \cite[Lemmas 4.14, 4.15]{OrientLeg} together with monotonicity argument in the proof of Proposition \ref{prop:width} (more precisely, from the part which shows the non-uniruledness of a closed Lagrangian submanifold constructed using an exact Lagrangian cap).
\end{proof}

\begin{lma}
\label{lem:equiv}
The Legendrian contact homology DGAs of $L_\Lambda$ and $\Lambda$ with $\ZZ_2$-coefficients (and no Novikov coefficients) are homotopy equivalent. If $L_{\Lambda,\emptyset}$ is a spin cobordism and if the DGA of $\Lambda$ is induced by a spin structure on the cap, then the same is true with $\ZZ$-coefficients.
\end{lma}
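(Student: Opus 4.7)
The plan is to use the previous lemma to localise the DGA computation for the Legendrian lift of $L_\Lambda$ to the slice $\{t=0\}$, where $L_\Lambda$ takes the form of a standard ``Morse two-copy'' model of $\Lambda$. The resulting algebra is then identified as a stabilisation of $(\mathcal{A}(\Lambda;R),\partial_\Lambda)$ by the algebraic scheme carried out in \cite[Section~4.3.2]{OrientLeg} and \cite[Section~10.2]{ContHomR}.

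First I would enumerate the Reeb chord generators of $\mathcal{A}(L_\Lambda;R)$. By construction every double point of $\varphi$ lies in $\{t=0\}\cap\{p_t=0\}$, and on this slice $L_\Lambda$ consists of two sheets both projecting to $\pi_{Lag}(\Lambda)$. The generators therefore split into two internal copies of $\mathcal{Q}(\Lambda)$ coming from the two sheets, together with mixed chords in $\mathcal{Q}(\Lambda_1,\Lambda_2)\cup\mathcal{Q}(\Lambda_2,\Lambda_1)$. Because $\varphi$ has a single non-degenerate critical point at $t=0$, the mixed chords include a distinguished pair $\{c,c^{*}\}$ of top- and bottom-degree generators produced by the Morse neck, analogous to the extra generators produced by Morse-function perturbation of a genuine two-copy.

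Next I would compute the differential. The previous lemma confines every rigid disc contributing to $\partial$ to the slice $\{t=0\}$, so the count on internal chords reproduces $\partial_\Lambda$ on $\mathcal{A}(\Lambda;R)$, while the count on the remaining mixed chords reproduces the bimodule differential $\partial_{\mathcal{C}}$ of Section~\ref{sec:char} augmented by the Morse-type terms involving $c$ and $c^{*}$. Crucially, $\partial c$ contains a leading term $a_1-a_2$ for every $a\in\mathcal{Q}(\Lambda)$, which enables a sequence of stable-tame cancellations identifying the two internal copies of $\mathcal{A}(\Lambda;R)$; the surviving mixed subcomplex is acyclic by Theorem~\ref{thm:invariance} applied to a two-copy Legendrian isotopic to a trivial one, and what remains is precisely $(\mathcal{A}(\Lambda;R),\partial_\Lambda)$. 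No Novikov coefficients are needed because every rigid disc is contained in the slice $\{t=0\}$ and hence realises a boundary class already present on $\Lambda$. For the spin/$\ZZ$-coefficient case, a spin structure on the cap $L_{\Lambda,\emptyset}$ restricts to the chosen spin structure on $\Lambda$ on both sheets, so the coherent orientation scheme of \cite{OrientLeg} applied to $L_\Lambda$ reproduces the signs used for $\Lambda$ under the identification of disc moduli in $\{t=0\}$.

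The main obstacle is the algebraic cancellation step: while the disc localisation and the chord classification are clean consequences of the previous lemma and the explicit form of $\varphi$, verifying that the Morse-type terms in $\partial c$ together with their higher-order corrections from the $\varphi$-perturbation suffice to realise a genuine homotopy equivalence of DGAs — rather than merely an isomorphism of homology or of characteristic algebras — is the technical content that must be extracted from the cited algebraic arguments in \cite{OrientLeg,ContHomR}.
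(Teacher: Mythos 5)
Your generator enumeration misreads the construction of $L_\Lambda$, and this breaks the whole argument. By the bullet conditions in Section \ref{sec:double}, the image of $\varphi$ meets $\{0\}\times\RR\times T^*M$ in the \emph{single} copy $\{0\}\times\{0\}\times\pi_{Lag}(\Lambda)$, and $\varphi$ has no double-points outside this slice. The neck is one immersed cylinder $[-1,1]\times\Lambda$, not two parallel sheets: over a Reeb chord of $\Lambda$, i.e.\ $\lambda_1\neq\lambda_2$ with $\pi_{Lag}(\lambda_1)=\pi_{Lag}(\lambda_2)$, the primitive $(1+\varphi(t))g$ forces the conormal coordinates $\varphi'(t)g(\lambda_1)$ and $\varphi'(t)g(\lambda_2)$, which coincide exactly where $\varphi'(t)=0$, i.e.\ at the minimum $t=0$, where both vanish. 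Hence $\mathcal{Q}(L_\Lambda)$ is canonically, grading-preservingly, identified with $\mathcal{Q}(\Lambda)$: one generator per chord, with no second internal copy, no mixed chords, and no Morse pair $\{c,c^*\}$ (a critical point of $\varphi$ produces no chord, since there is only one sheet over it). The ``two internal copies plus mixed chords'' picture you set up describes the Morse two-copy $L_1\cup L_2$ used in the proof of Theorem \ref{thm:uniruled} --- a different object entirely --- so the stable-tame cancellation apparatus you build has nothing to act on.

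Even within your reading, the key algebraic step would fail: the mixed subcomplex of a genuine two-copy is \emph{not} acyclic in general --- in the paper's own decomposition it contains the (shifted) Morse complex of the perturbing function with coefficients in the characteristic algebra, and Theorem \ref{thm:invariance} asserts only invariance; acyclicity is deduced there from displaceability of $L$, which is neither available nor true for a closed two-copy as such. Moreover, $\partial c$ having leading terms of the form $a_1-a_2$ does not by itself yield a stable tame isomorphism onto a single copy of $\mathcal{A}(\Lambda;R)$. The intended proof is much shorter, and your first step already contains it: once the generators are canonically identified, one only has to match differentials, and the preceding lemma (via \cite[Lemmas 4.14, 4.15]{OrientLeg} and the monotonicity argument from the proof of Proposition \ref{prop:width}) confines every rigid $i\oplus J$-holomorphic disc with boundary on $L_\Lambda$ and one positive puncture to $\{t=0\}$, hence to $\{(0,0)\}\times T^*M$ with boundary on $\pi_{Lag}(\Lambda)$. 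This gives a bijection between the rigid discs computing $\partial$ for $L_\Lambda$ and those for $\Lambda$, and after comparing signs (the spin structure on the cap inducing the orientation data in the spin case) the two DGAs are \emph{equal} for these choices of almost complex structure --- in particular homotopy equivalent. So the disc-confinement lemma is not an input to a cancellation scheme; it directly yields an identity of differentials.
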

\begin{proof}
First, observe that there is a canonical identifications of the generators which preserves the grading. We must thus show that the differentials agree.

By the above Lemma we know that every $i \oplus J$-holomorphic disc with boundary on $L_\Lambda$ lies in the plane $\{ t =0\}$, and thus, is contained inside $\{(0,0)\} \times T^*M$ and has boundary on $\{(0,0)\} \times \pi_{Lag}(\Lambda)$. This shows that there is a bijective correspondence between the $i \oplus J$-holomorphic discs contributing to the differential of $L_\Lambda$ and the $J$-holomorphic discs contributing to the differential of $\Lambda$. Comparing the signs, it follows that the Legendrian contact homology DGA of the Legendrian lift of $L_\Lambda$ is equal to the Legendrian contact homology DGA of $\Lambda$ for these choices of almost complex structures. 
\end{proof}

\subsection{Proof of Theorem \ref{thm:uniruled}}
This proof is an adaptation of the proof of \cite[Theorem 5.5]{DualityLeg} to the current algebraic set-up.

Take the two-copy lift as constructed in \cite[Section 3.1]{DualityLeg}. In other words, in a Weinstein neighborhood of $L_1:=L \subset (P,d\theta)$ symplectomorphic to the co-disc bundle $D^*L$ of some radius, we let $L_2$ be given by the section $df$ for a sufficiently $C^1$-small Morse function
\[ f \colon L \to \RR.\]
We choose a Legendrian lift of $L_1 \cup L_2$ to the contactisation $P \times \RR$ where the $z$-coordinate of the lifts satisfy \[\min_{z}(L_2)-\max_{z}(L_1)>N>0.\] 
In particular, we may assume that
\[\mathcal{Q}(L_1 \cup L_2)=\mathcal{Q}(L_1) \sqcup \mathcal{Q}(L_2) \sqcup \mathcal{Q}(L_1,L_2)\]
and
\[ \mathcal{Q}(L_1,L_2)=\mathcal{P} \sqcup Crit(f) \sqcup \mathcal{Q} \]
where $\mathcal{P}$ and $\mathcal{Q}$ each can be naturally identified with $\mathcal{Q}(L)$ and where, for $N>0$ sufficiently large, the action of the Reeb chords in $\mathcal{P}$ are strictly less then the action of the Reeb chords in $Crit(f)$ which, in turn, are strictly less than the action of the Reeb chords in $\mathcal{Q}$.

We will consider the complex
\[ (\mathcal{C}(L_1,L_2;R),\partial_{\mathcal{C}}),\]
defined in Section \ref{sec:char} above, which is a $\mathcal{C}_R \otimes \mathcal{C}_R^{op}$-module over the characteristic algebra of the Chekanov DGA of $L_1 \cup L_2$. Since $L$ is displaceable by assumption, this complex is acylic by Theorem \ref{thm:invariance}.

Given that $f$ is sufficiently $C^1$-small and $N>0$ sufficiently large, since $\partial_{\mathcal{C}}$ preserves the action filtration, we get a decomposition
\begin{gather*}
\mathcal{C}(L_1,L_2;R)=Q \oplus C \oplus P, \\
\partial_{\mathcal{C}}=\begin{pmatrix} \partial_Q & 0 & 0 \\
\rho & -\partial_C & 0 \\
\eta & \sigma & \partial_P
\end{pmatrix},
\end{gather*}
where $C$ is generated by the Reeb chords $Crit(f)$ corresponding to the critical points of $f$, and where $Q$ and $P$ are generated by $\mathcal{Q}$ and $\mathcal{P}$, respectively.

By \cite[Proposition 3.7(2)]{DualityLeg}, for a suitable choice of compatible almost complex structure $J$, we moreover have an isomorphism
\[(C_\bullet,\partial_C) \simeq (C^{Morse}_{\bullet+1}(f)\otimes \mathcal{C}_{L;R},\partial_f)\]
where the latter is the Morse complex (with degree shifted), for some generic choice of metric on $L$, having coefficients in the characteristic algebra $\mathcal{C}_{L;R}$.

Let $c_L \in C$ denote an $\mathcal{C}_R$-fundamental class of $L$, which is a linear combination of the the maxima of $f$. Observe that in the case when $1+1\neq 0$ in $\mathcal{C}_R$, and hence in $R$, the closed manifold $L$ is spin by assumption. In particular $L$ is orientable in this case, and there is such a fundamental class.

Assume that $L$ is not uniruled. In particular, we may assume that there is a generic compatible almost complex structure $J$ on $P$ for which there are no $J$-holomorphic disks with boundary on $L$, one positive boundary-puncture at $c$, and one boundary point passing through a maximum $c_M$ of $f$. Since \cite[Theorem 3.6]{DualityLeg} implies that the coefficient in front of $c_M$ in the expression $\rho(c)$ is determined by a count of such rigid configurations, also called generalised pseudo-holomorphic discs, it follows that $c_L$ cannot be in the image of $\rho$.

Since $c_L$ is not in the image of $\partial_C$ either, as follows by degree reasons, we have concluded that $c_L$ is not a boundary.

We now argue that $\partial_{\mathcal{C}}(c_L)=0$. First, $\partial_C(c_L)=0$ holds since $c_L$ is the image of an $R$-fundamental class under the above morphism.

It remains to show that $\sigma(c_L)=0$. This follows by a count of discs analogous to the count in the proof of \cite[Theorem 5.5]{DualityLeg} where the "dual" statement is shown. For a Reeb chord $c \in \mathcal{Q}(L)$, let $p_c$ denote the corresponding Reeb chord in $\mathcal{P}$ and $c_i \in \mathcal{Q}(L_i)$ the corresponding Reeb chord on $L_i$ for $i=1,2$ (so $c_1=c$).

Let $c_M \in C$ be a local maximum of $f$. \cite[Theorem 3.6(3)]{DualityLeg} implies that the $J$-holomorphic discs contributing to $\sigma(c_M)$ correspond to generalised pseudo-holomorphic discs with boundary on $L$ consisting of
\begin{itemize}
\item A $J$-holomorphic disc in $P$ of expected dimension $-1$ with boundary on $L$ and one positive puncture.
\item A negative gradient flow-line connecting $c_M$ to the boundary of the above disc.
\end{itemize}
Formula (3.11) in \cite{DualityLeg} implies that the above $J$-holomorphic disc is constant. In particular, for generic $J$, it is contained entirely in a double-point and must have exactly one positive and one negative puncture at this Reeb chord as follows by index and action considerations.

For each $c \in \mathcal{Q}(L)$, and for a generic $f$, it follows that there are two $J$-holomorphic discs contributing to $\partial_{C}(c_M)$: one with negative punctures at $p_c$ and $c_1$, and one with negative punctures at $p_c$ and $c_2$. In other words
\[ \partial_{\mathcal{C}}(c_M)=\sum_{c \in \mathcal{Q}(L)}(\Phi(c_2 \otimes 1)+\Phi(1 \otimes c_1))p_c=\sum_{c \in \mathcal{Q}(L)}(c-c)p_c=0.\] 
Also see \cite[Remark 5.6]{DualityLeg} for the similar cancellation in the proof of \cite[Theorem 5.5]{DualityLeg}.

By the acyclicity of $(\mathcal{C}(L_1,L_2;R),\partial_{\mathcal{C}})$, the coefficients must be trivial, i.e $\mathcal{C}_{L;R}=0$, since otherwise $[c_M] \in H\mathcal{C}(L_1,L_2;R)$ would represent a non-zero class in homology. It follows that
\[ HC(L;R)=0 \]
as well, since $1 \in \mathcal{A}(L;R)$ is a boundary.

\section*{Acknowledgements}
The author is grateful to Matthew Strom Borman, Octav Cornea, Mark McLean, Emmy Murphy, and Kai Zehmisch for interesting discussions and suggestions, and to Baptiste Chantraine for valuable discussions about the fundamental class in Legendrian contact homology. The author would finally like to thank the organisers of the conference \emph{D-Days: A Panorama of Geometry} in ETH, Z\"{u}rich, a conference celebrating Dietmar Salmon's 60:th birthday, during which the main parts of this article were written.

\bibliographystyle{alphanum}
\bibliography{references}
\end{document}